\newcommand{\shrinkmargins}[1]{
  \addtolength{\textheight}{#1\topmargin}
  \addtolength{\textheight}{#1\topmargin}
  \addtolength{\textwidth}{#1\oddsidemargin}
  \addtolength{\textwidth}{#1\evensidemargin}
  \addtolength{\topmargin}{-#1\topmargin}
  \addtolength{\oddsidemargin}{-#1\oddsidemargin}
 \addtolength{\evensidemargin}{-#1\evensidemargin}
  }
\theoremstyle{plain}
\newtheorem{theorem}{Theorem}[section]
\newtheorem{lemma}[theorem]{Lemma}
\newtheorem{proposition}[theorem]{Proposition}
\newtheorem*{teo}{Theorem}
\newtheorem{definition}[theorem]{Definition}
\theoremstyle{remark}
\newtheorem{remark}[theorem]{Remark}
\theoremstyle{definition}
\newtheorem{example}[theorem]{Example}
\def \Z { \mathbb{Z}}
\def \Q { \mathbb{Q}}
\def \R { \mathbb{R}}
\def \det { \text{det}}
\begin{document}

\thispagestyle{empty}
\setcounter{tocdepth}{7}

\title{An introduction to $\Gamma$-number fields.}
\author{Guillermo Mantilla-Soler}

\date{}

\maketitle

\begin{abstract}
It follows from generalities of quadratic forms that the spinor class of the integral trace of a number field determines the signature and the discriminant of the field. In this paper we define a family of number fields, that contains among others all odd degree Galois tame number fields, for which the converse is true. In other words, for a number field $K$ in such family we prove that the spinor class of the integral trace carries no more information about $K$ than the determinant and the signature do.
\end{abstract}

\section{ Introduction}

Let $K$ be a number field, $O_{K}$ its maximal order and let $r_{K}$ and $s_{K}$ be the number of real and complex embeddings respectively. The {\it integral trace form} of $K$ is the isometry class of the $\Z$-quadratic module \[\langle O_{K},  {\rm t}_{K}\rangle\] associated to the trace pairing \[O_{K} \times O_{K} \to \Z;  \ \ \ (x,y) \mapsto {\rm Tr}_{K/\Q}(xy).\]

\noindent Since $(r_{K}+s_{K}, s_{K})={\rm Sign}(\langle O_{K},  {\rm t}_{K}\rangle)$ and ${\rm disc}(K)=\det(\langle O_{K},  {\rm t}_{K}\rangle)$ (see \cite{Ta}) it follows from generalities of quadratic forms (see \cite[IX \S4]{cassels}) that the spinor genus of $\langle O_{K},  {\rm t}_{K}\rangle\ $ determines the signature and the discriminant of $K$. Besides the trivial case of fields of degree less than $3$, there are some interesting instances where the converse of the above holds:

\begin{teo}\cite[Theorem 3.2]{M5} Let $K$ and $L$ be cubic number fields. Then $\langle O_{K},  {\rm t}_{K}\rangle$ and $\langle O_{L},  {\rm t}_{L}\rangle$ belong to the same spinor genus if and only if ${\rm disc}(K)={\rm disc}(L).$
\end{teo}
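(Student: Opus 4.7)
The ``only if'' direction is immediate from the paragraph preceding the theorem: sharing a spinor genus forces sharing the determinant and signature. For a cubic field the signature $(r_K+s_K,s_K)$ is a function of ${\rm disc}(K)$, being $(3,0)$ when ${\rm disc}(K)>0$ and $(2,1)$ when ${\rm disc}(K)<0$, so no information beyond the discriminant is needed. The substance lies in the ``if'' direction, which I would split into two stages.

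\emph{From same discriminant to same genus.} I would show that the local trace lattice $\langle O_K\otimes\Z_p,\,t_K\rangle$ depends only on the valuation $v_p({\rm disc}(K))$. The cubic \'etale $\Q_p$-algebras are finite in number (Krasner), and at each $p$ their discriminant-valuations take only finitely many values. At a tame prime $p\neq 3$, one decomposes $O_K\otimes\Z_p$ orthogonally as $\langle 3\rangle \perp L_p^0$, where $L_p^0$ is the rank-two trace-zero sublattice; the Jordan decomposition of $L_p^0$ can be read off directly from the splitting type of $p$, and two cubic $\Q_p$-algebras with the same $v_p({\rm disc})$ yield isometric local trace lattices. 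At $p=3$ the presence of wild ramification obstructs this clean orthogonal decomposition, and one has to enumerate the finitely many cubic extensions of $\Q_3$ (unramified, partially ramified, totally ramified) and verify by explicit computation, e.g.\ using an Eisenstein uniformizer, that the Jordan symbol at $3$ is still a function of $v_3({\rm disc}(K))$.

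\emph{From same genus to same spinor genus.} When $K$ is complex (${\rm disc}(K)<0$), the trace lattice is an indefinite ternary $\Z$-lattice, and for indefinite lattices of rank $\geq 3$ the spinor genus already coincides with the isometry class, so the first stage suffices. When $K$ is totally real (${\rm disc}(K)>0$), the trace form is positive definite of rank three, and genus and spinor genus need not agree. Here one must compute the local spinor norm groups $\theta(O^+(L_p))$ at every prime and show that, together with the rational spinor norms, they saturate the idele class group modulo squares, so that the genus consists of a single spinor genus. A useful tool is the vector $1\in O_K$: it has trace form value $3$ regardless of $K$, providing a universal ``small'' vector whose reflections give an explicit large subgroup of the local spinor norms via Kneser's formulas.

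The main obstacle I anticipate is the combined analysis at $p=3$ in the totally real case, where both the local trace form (under wild ramification) and the local spinor norm group require delicate case-by-case computation, and where the two computations interact rather than decouple. Once these 3-adic inputs are pinned down, the remaining primes should be routine and the two stages assemble into the stated equivalence.
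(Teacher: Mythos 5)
The paper does not actually prove this statement; it imports it from \cite{M5}, and the surrounding text supplies the two ingredients your plan rediscovers: the genus of the integral trace is controlled by local data (the $\alpha$-invariants of \cite[Proposition 2.9]{M5}, plus an ad hoc analysis at the wild prime $3$), and by the main theorem of \cite{Manti2} the genus of the integral trace coincides with its spinor genus in degree at least $3$. Your two-stage outline (same discriminant $\Rightarrow$ same genus, then one spinor genus per genus) is therefore the right architecture, and your local analysis at tame primes and the case enumeration at $p=3$ is the correct, if unexecuted, content of the first stage.

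There is, however, a genuine logical error in the second stage. For complex cubic fields you invoke Eichler's theorem, that an indefinite $\Z$-lattice of rank at least $3$ is alone in its spinor genus, and conclude that ``the first stage suffices.'' It does not: stage one only places the two trace lattices in the same genus, and a genus of indefinite ternary lattices can still split into several spinor genera (equivalently, by Eichler, into several classes). The number of spinor genera in a genus is computed from the local spinor norm groups in exactly the same way for indefinite as for definite lattices, the only difference being the archimedean contribution; so the spinor-norm computation you reserve for the totally real case is needed for both signatures. This is precisely what the cited theorem of \cite{Manti2} provides. Separately, the vector $1\in O_K$ of norm $3$ only contributes the square class of $3$ to each local spinor norm group; to show the genus is a single spinor genus one needs $\theta(O^{+}(L_p))\supseteq \Z_p^{*}$ at every prime $p$ (including $p=2$, which is the delicate one), and that requires the full Jordan data of the local trace lattice, not a single reflection.
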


Recall that for cubic fields the sign of the discriminant determines the signature of the field, hence there is not necessary to mention signatures in the above theorem. There are examples that show that the above theorem can not be extended to isometry class; take for instance (see \cite[Theorem 5.2]{M6} and \cite[Theorem 3.2]{M5}) two non isomorphic cubic fields of positive fundamental discriminant e.g., any two of the four cubic fields of discriminant $32009$. There are some cases in which the isometry class of the trace is determined by the discriminant and the signature:

\begin{teo}\cite[Theorems 4.2, 4.5]{MantiBol} Let $n$ be a positive integer and let $K, L$ be two tame $\Z/n\Z$-number fields. Then \[\langle O_{K},  {\rm t}_{K}\rangle \cong \langle O_{L},  {\rm t}_{L}\rangle \ \mbox{if and only if} \ {\rm disc}(K)={\rm disc}(L).\]
\end{teo}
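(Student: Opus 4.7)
The forward implication is immediate since isometric forms have equal determinants. For the converse, the plan is to show, among tame $\Z/n\Z$-number fields, that $\mathrm{disc}(K) = \mathrm{disc}(L)$ forces $\langle O_K, t_K\rangle$ and $\langle O_L, t_L\rangle$ to lie in the same genus, and then that the genus of such a trace form contains a single isometry class.

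\emph{Signature and local isometries.} A cyclic extension $K/\Q$ is totally real or totally complex, since complex conjugation generates a subgroup of order $1$ or $2$ of $\mathrm{Gal}(K/\Q) \cong \Z/n\Z$. Since the sign of the discriminant equals $(-1)^{s_K}$, equal discriminants together with the degree $n$ and the tameness constraints pin down $s_K$ and hence give equal signatures. For the local trace forms, by Kronecker--Weber any tame $\Z/n\Z$-extension is contained in a cyclotomic field, and each completion $K_{\mathfrak{p}}/\Q_p$ is a tame cyclic local extension, uniquely determined up to isomorphism by its ramification index $e_p$ and residue degree $f_p$. Using an explicit $\Z_p$-basis of $O_{K_{\mathfrak{p}}}$ built from a Teichm\"uller lift and powers of a uniformizer, one writes down a Gram matrix for $\langle O_{K_{\mathfrak{p}}}, t_{K_{\mathfrak{p}}}\rangle$ whose $\Z_p$-isometry class depends only on $(e_p, f_p)$, equivalently on $v_p(\mathrm{disc}(K))$ and $n$. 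Thus $\mathrm{disc}(K) = \mathrm{disc}(L)$ gives a local isometry at every finite prime, and together with signature agreement places the two trace forms in the same genus.

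\emph{Genus to isometry.} This is the main obstacle. My plan is to exploit the cyclic Galois structure: over $\Q$, the regular representation decomposes as $K = \bigoplus_{d \mid n} V_d$ where $V_d$ is the $\Q(\zeta_d)$-isotypic summand, and the trace pairing is orthogonal with respect to this decomposition. Under the tameness hypothesis, Leopoldt's theorem ensures that $O_K$ is locally free of rank one over its associated order in $\Q[G]$, which allows the rational orthogonal decomposition to lift to an integral one $O_K = \bigoplus_{d \mid n} L_d$. Each summand $L_d$ is a $\Z$-lattice whose isometry class is controlled by the conductor of the characters of order $d$, a datum which is encoded in $\mathrm{disc}(K)$ via the conductor-discriminant formula. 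Matching summand-by-summand then produces a $\Z$-isometry $\langle O_K, t_K\rangle \cong \langle O_L, t_L\rangle$. The delicate point is the integral orthogonal lift; this is precisely where tameness does its work, since in the wild case the isotypic lattices need not decompose orthogonally over $\Z$.
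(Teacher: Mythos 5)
This statement is quoted from \cite{MantiBol}; the present paper gives no proof of it, so there is nothing internal to compare with, and your argument must be assessed on its own terms. It has genuine gaps at both stages. In the genus step, your assertion that the $\Z_p$-isometry class of the local trace lattice is determined by $(e_p,f_p)$, ``equivalently'' by $v_p(\mathrm{disc}(K))$ and $n$, is false as an equivalence: tameness gives $v_p(\mathrm{disc}(K))=(e_p-1)f_pg_p=n-n/e_p$, which recovers $e_p$ and the product $f_pg_p$ but not $f_p$ and $g_p$ separately, and the local class genuinely depends on the parity of $g_p$ --- in the notation of Section 2, $\alpha_p^{K}=e^{fg}u_p^{\,fg-g}$ up to squares, and for $fg$ even the exponent of the non-residue $u_p$ changes parity between $(f,g)=(2,1)$ and $(f,g)=(1,2)$. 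Example \ref{KleinExample} of this very paper exhibits two $V_4$-fields with equal discriminant and signature whose traces lie in different spinor genera precisely because $5$ has $g=1$ in one and $g=2$ in the other. Your argument nowhere uses that the Galois group is cyclic rather than merely abelian, so as written it would ``prove'' the false $V_4$ statement as well; the real content of the genus step is that for \emph{cyclic} fields the discriminant forces the parity of $g_p$, and that requires class-field-theoretic input you have not supplied. The signature claim also fails for $4\mid n$: a totally complex field of degree $n\equiv 0\pmod 4$ has discriminant of sign $(-1)^{n/2}=+1$, so the sign does not separate totally real from totally complex (the paper explicitly flags this implication as a result, not a triviality).

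The genus-to-isometry step rests on an object that does not exist. The projectors onto the $\Q(\zeta_d)$-isotypic components lie in $\Q[G]$ but are not integral at primes dividing $n$, and $O_K$ is in general strictly larger than $\bigoplus_{d\mid n}(O_K\cap V_d)$ even for tame fields: already for $K=\Q(\sqrt{5})$ one has $O_K=\Z[(1+\sqrt{5})/2]$, whose trace form has Gram matrix $\left(\begin{smallmatrix}2&1\\1&3\end{smallmatrix}\right)$ of determinant $5$ and admits no orthogonal splitting off the rank-one summand $\Z\cdot 1$ (that would force $\det=2c=5$ for some $c\in\Z$), whereas $\Z\oplus\Z\sqrt{5}$ has determinant $20$. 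Tameness, via Hilbert--Speiser/Leopoldt, gives freeness of $O_K$ over $\Z[G]$ (resp.\ the associated order), which is weaker than, and does not produce, an orthogonal integral decomposition; so the summand-by-summand matching cannot begin. A viable route --- and, to my knowledge, the one taken in \cite{MantiBol} --- is instead to use the tame normal integral basis to compute the trace form explicitly as a standard model depending only on the ramification data, and separately to prove that for cyclic fields this ramification data is pinned down by the discriminant.
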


For odd $n$ there is not need to mention the signature since the fields are totally real. As it turns out, in the above situation, for  $n$ even the equality between discriminants imply the equality of the signatures. The following example shows that the theorem above can not be extended to arbitrary Galois groups, not even abelian ones. All the examples in this paper have been obtained with the help of John Jones' tables of number fields \cite{Jones}. 

\begin{example}\label{KleinExample}

Let $K$ and $L$ be the number fields defined respectively by the polynomials $f_{K}=x^4 - 41x^2 + 144$ and $f_{L}=x^4 - x^3 - 46x^2 - 115x - 35
$. Both fields are quartic $V_{4}$-Galois fields with discriminant equal to $5^2\cdot 13^2 \cdot 17^2$. Since in the first field $p=5$ has one prime factor, while in the second it has two, we see using \cite[Proposition 2.9]{M5} that the integral traces of $K$ and $L$ are not in the same spinor genus, thus they are not isometric.
\end{example}

In this paper we define a class of number fields for which the discriminant and the signature are necessary and sufficient  to determine the spinor class of the integral trace.

\begin{definition}

Let $K$ be a number field, let $p$ be a rational prime and let $e_{1},...e_{g}$ be the ramification indices of $p$ in $K$. The prime $p$ is called {\it $\epsilon$-split homogeneous in $K$} if  $\displaystyle e_{1}=...=e_{g} \ $ for all $1 \leq i \le g.$ 
\end{definition}
The definition of $\epsilon$-split homogeneous is inspired by the behavior of ramification in Galois number fields; however there are non Galois number fields in which every prime is  $\epsilon$-split homogeneous e.g., a field in which every ramified prime is totally ramified. For instance, the number field defined by the polynomial $x^4 - x^3 - 7x^2 + 11x + 3$ is a $S_{4}$ field in which $p=59$ is the only ramified prime and such prime is totally ramified.\\

\begin{definition}\label{Gamma}
A number field $K$ is called {\it $\Gamma$-number field} if the following conditions hold:

\begin{itemize}
    \item[(a)] The field $K$ is tame i.e., every ramified prime $p$ is not wildly ramified.

    \item[(b)] Among all the odd primes $p$ that are ramified in $K$ there is at most one prime that does not satisfy all of the following
    
    \begin{itemize}
    
        \item[$\bullet$]  $p$ is $\epsilon$-split homogeneous in $K$

        \item[$\bullet$] There is an odd number of primes in $K$ lying over $p$ 
        
        \item[$\bullet$] If e is the ramification degree of $p$ in $K$ then, $\displaystyle \frac{[K:\Q]}{e}$ is odd.
    \end{itemize}
    
    \end{itemize}
    
\noindent If there is an odd ramified prime that does not satisfies (b) we call such prime the exceptional prime of $K$.
    
\end{definition}

\begin{remark}
 Notice that the collection of $\Gamma$-number fields contains all tame Galois number fields of odd degree and all tame number fields in which all ramified primes are totally ramified. However, there can be $\Gamma$-number fields that are of even degree, that are not Galois or in which all their ramified primes are not totally ramified. One such example is the sextic field $K$ defined by the polynomial $x^6 - 2x^5 + 3x^4 - 9x^3 + 8x^2 - 7x - 5$. This is a ${\rm D}_{12}$-number field with discriminant $3^3\cdot 23^3$ such that $3O_{K}=\mathcal{P}_{0}^2$ and $23O_{K}=(\mathcal{P}_{1}\mathcal{P}_{2})^2$ for some maximal ideals $\mathcal{P}_{i}$. In all the cases mentioned in this remark no field has an exceptional prime. However, as verified by Example \ref{UnEjemplo}, there are $\Gamma$-fields in which exceptional primes exist. 
 \end{remark}

For $\Gamma$-number fields the signature and the discriminant are enough and sufficient to determine the spinor genus of the integral trace. Moreover, for non totally real fields this can be improved to isometry class. This is the content of our main result:

 \begin{teo}[cf. Theorem \ref{ElPrincipal}] Let $K, L$ be two $\Gamma$-number fields. Suppose that the set formed by exceptional primes of $K$ and $L$ contains at most one element. Then $\langle O_{K},  {\rm t}_{K}\rangle$ and $\langle O_{L},  {\rm t}_{L}\rangle$ belong to the same spinor genus if and only if ${\rm disc}(K)={\rm disc}(L)$ and ${\rm Sign}(\langle O_{K},  {\rm t}_{K}\rangle)={\rm Sign}(\langle O_{L},  {\rm t}_{L}\rangle)$. If $K$ is not totally real, then \[\langle O_{K},  {\rm t}_{K}\rangle \cong \langle O_{L},  {\rm t}_{L}\rangle \ \mbox{if and only if} \ {\rm disc}(K)={\rm disc}(L) \ {\rm and} \  {\rm Sign}(\langle O_{K},  {\rm t}_{K}\rangle)={\rm Sign}(\langle O_{L},  {\rm t}_{L}\rangle)\]
\end{teo}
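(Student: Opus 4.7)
The natural strategy is to break the statement into two independent assertions and handle them by local-global machinery. The first assertion (spinor genus $=$ discriminant $+$ signature) reduces, via \cite[IX \S4]{cassels}, to showing that under the hypotheses the two integral trace forms (a) lie in the same genus, and (b) have matching spinor norms at every prime. The second assertion (upgrade to isometry when $K$ is not totally real) I would deduce from Eichler's theorem for indefinite quadratic $\Z$-lattices of rank $\geq 3$: since $[K:\Q]\geq 2$ and $K$ has a complex embedding, the trace form is indefinite, and its spinor genus therefore coincides with its proper isometry class; a separate parity argument then rules out the orientation obstruction.

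For (a), genus agreement is automatic at the archimedean place from the signature hypothesis and at every unramified odd prime from the discriminant hypothesis (the local lattice is $\Z_p$-unimodular of prescribed determinant). The real work is at the tamely ramified primes, where I would write $O_K\otimes \Z_p = \bigoplus_{\mathfrak{p}\mid p} O_{\mathfrak{p}}$ and use the classical description of the tame local trace form on each $O_{\mathfrak{p}}$ as, up to $\Z_p^{\times}$-scaling, an orthogonal sum of $f$ copies of a rank-$e$ block diagonalising as $\langle 1, p, p^2, \ldots, p^{e-1}\rangle$ (times a unit accounting for residue norms). The three bullets in Definition \ref{Gamma}(b) — equal ramification indices $e$, odd number of primes $g$, and odd cofactor $[K:\Q]/e$ — are exactly what is needed so that, after Jordan decomposition at $p$, each valuation stratum has an odd number of summands with a well-defined unit determinant modulo squares. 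Consequently, the local isometry class at such a prime is pinned down by the local determinant, which is controlled by $\mathrm{disc}(K)$.

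For (b), the spinor norm computation, I would show that at each odd prime $p$ satisfying the three bullets, the sum of spinor norms of rotations of $\langle O_K, t_K\rangle\otimes \Z_p$ contains $\Z_p^{\times}$ — this follows because each Jordan block of odd rank has symmetries whose spinor norms generate all units, and the oddness conditions give at least one such block in every relevant scale. Hence the spinor genus condition at these primes is vacuous. At the single (shared or absent) exceptional prime $q$, the spinor norms need not be all of $\Z_q^{\times}$, but the discriminant and signature hypotheses together with global reciprocity force the product of local spinor norms of the quotient $\langle O_K, t_K\rangle\otimes\Z_q / \langle O_L, t_L\rangle\otimes\Z_q$ to be trivial, so the one uncontrolled local factor is compensated by the others; the assumption that the two fields share at most one exceptional prime is precisely what prevents two independently uncontrollable local spinor contributions from appearing.

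The main obstacle I expect is the exceptional-prime bookkeeping in the previous paragraph: at a prime where $\epsilon$-split homogeneity or the parity conditions fail, one loses the clean Jordan decomposition and the spinor norm group can be a proper subgroup of $\Z_q^{\times}\bmod(\Z_q^{\times})^2$. Overcoming this will require a careful case analysis of which Jordan scales can occur at $q$ — governed by the allowed ramification data compatible with a fixed $\mathrm{disc}(K)$ — and a verification that in each case the forced equality of local determinants already implies equality of the spinor norm cosets. The parallel cubic and $\Z/n\Z$-Galois results (Theorem 3.2 of \cite{M5} and Theorems 4.2, 4.5 of \cite{MantiBol}) should provide the computational templates for this last step.
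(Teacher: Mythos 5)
Your overall local-global plan is reasonable, and the final step (upgrading spinor genus to isometry via Eichler's theorem for indefinite lattices of rank $\geq 3$) is exactly what the paper does via \cite[Theorem 2.13]{M5}. But there are two genuine gaps. First, your local model of the tame trace form is wrong: at an odd tame prime the Jordan splitting of $\langle O_{K},{\rm t}_{K}\rangle\otimes\Z_{p}$ has only the scales $1$ and $p$ (a unimodular constituent of rank $F=\sum f_{i}$ and a $p$-modular one of rank $n-F$), not blocks $\langle 1,p,p^{2},\dots,p^{e-1}\rangle$; and even with the correct splitting, the local integral class is \emph{not} ``pinned down by the local determinant'' --- a lattice $U_{0}\perp pU_{1}$ over $\Z_{p}$ carries two independent unit-square classes $\det U_{0}$ and $\det U_{1}$, and the discriminant only fixes their product. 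The missing content is precisely the paper's Lemma \ref{Lemazo}: under the three bullets of Definition \ref{Gamma}(b) the remaining invariant ($\alpha_{p}^{K}$, which encodes the leftover unit determinant) equals $e\equiv n/(n-v_{p}(d))$ modulo squares, hence is itself a function of the degree and the discriminant. Your ``odd number of summands in each stratum'' heuristic does not substitute for this computation.

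Second, and more seriously, your treatment of the exceptional prime $q$ never establishes that the two lattices lie in the same genus there; you jump straight to spinor-norm bookkeeping, which presupposes genus equality. The paper's mechanism is different and is the crux of the argument: local integral isometry at every place except $q$ gives, via Hasse--Minkowski (equivalently, Hilbert reciprocity for the Hasse invariants), rational isometry over $\Q_{q}$ as well; then \cite[Lemma 2.1]{M5} together with \cite{M7} recovers the equality of the $\alpha$-symbols at $q$ from the rational class, and \cite[Proposition 2.9]{M5} concludes. Note also that your entire spinor-norm computation in part (b) is unnecessary: by \cite{Manti2} the genus and the spinor genus of an integral trace form coincide in degree $\geq 3$, which is already built into \cite[Proposition 2.9]{M5}. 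As written, your proposal identifies the exceptional prime as ``the main obstacle,'' but the proposed fix (a case analysis of Jordan scales at $q$) does not supply the reciprocity step that actually closes the argument.
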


\begin{example}\label{UnEjemplo}

Consider the sextic fields $K$ and $L$ defined by the polynomials $f_{K}:=x^6 - x^5 - 2x^4 + x^3 + 7x^2 - 6x + 4$ and $f_{L}:=x^6 - 3x^5 + 10x^4 - 15x^3 + 19x^2 - 12x + 3$. Both fields have signature $(r,s)=(0,3)$, discriminant $-3^3\cdot 107^2$ and Galois closure with Galois group ${\rm D}_{12}$. The fields are not isomorphic; in $K$ the prime $p=3$ has only one prime lying over it, while in $L$ it has three. In both fields the prime $p=3$ is $\epsilon$-split with ramification index $e=2$. In particular, $K$ and $L$ are $\Gamma$-fields that are neither Galois or  of odd degree or with totally ramified primes. Moreover, the prime $q=107$ is exceptional in both fields. Since $K$ is not totally real we know by Theorem  \ref{ElPrincipal} that  $\langle O_{K},  {\rm t}_{K}\rangle \cong \langle O_{L}, {\rm t}_{L}\rangle$.

\end{example}

\section{Proofs of our results}

One of the main ingredients we use is the set of $\alpha$-invariants of a number field; for definitions and properties see \cite{M7}. 

\begin{definition}
Let $K$ be a number and let $p$ be a an odd prime. Let $g$ be the number of prime factors of $p$ in $K$ and let  $(e_{1},...,e_{g})$ and $(f_{1},...,f_{g})$ be the ramification and residue degrees of $p$ over $K$. The first ramification invariant of $p$ in $K$ is the integer
\begin{align*}
\alpha_{p}^{K} &:= \left(\prod_{i=1}^{g}  e_{i}^{f_{i}}\right) u_{p}^{(F-g)},
\end{align*} 

where $F=\sum f_{i}$ and $u_{p} \in \{1,...,p-1\}$ is the first non quadratic residue modulo $p$.
\end{definition} 

Among the useful properties of such invariants we have that they determine the genus of the integral trace which in degree at least $3$, thanks to \cite{Manti2}, is the same as the spinor genus. The main relation about $\alpha$-invariants and the integral trace is the following:
\begin{proposition}\cite[Proposition 2.9]{M5}
Let $K,L$ be tame number fields of degree $n \ge 3$. The forms $\langle O_{K},  {\rm t}_{K}\rangle$ and $\langle O_{L},  {\rm t}_{L}\rangle$ belong to the same spinor genus if and only if the following conditions hold:

\begin{itemize}

\item[i)] $\mathrm{disc}(K)=\mathrm{disc}(L)$,

\item[ii)] $s_{K}=s_{L}$,

\item[iii)] For every finite prime $p \neq 2$ that divides the common discriminant of $K$ and $L$ we have that \[ \left( \frac{\alpha_{p}^{K}}{p} \right)=\left( \frac{\alpha_{p}^{L}}{p} \right). \]

\end{itemize}

\end{proposition}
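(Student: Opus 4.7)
The forward implications of both statements follow from the generalities recalled in the introduction: the spinor genus already determines the signature and the discriminant, and the isometry direction follows a fortiori. My plan therefore concentrates on the converses. Assume $\mathrm{disc}(K)=\mathrm{disc}(L)$ together with ${\rm Sign}(\langle O_{K},{\rm t}_{K}\rangle)={\rm Sign}(\langle O_{L},{\rm t}_{L}\rangle)$. A dimension count on the signature pair forces $n:=[K:\Q]=[L:\Q]$ and $s_{K}=s_{L}$, so by Proposition 2.9 the spinor genus equality reduces to verifying
\[\left(\frac{\alpha_{p}^{K}}{p}\right)=\left(\frac{\alpha_{p}^{L}}{p}\right)\]
for every odd prime $p$ dividing the common discriminant.

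I would first dispatch every odd ramified prime $p$ that is non-exceptional in both $K$ and $L$. Let $e$ be the common ramification index, $g$ the number of primes of $K$ over $p$, and $F=\sum f_{i}=n/e$. The three bullets give $g$ and $F=n/e$ both odd, hence $F-g$ even, and the definition of the $\alpha$-invariant collapses to
\[\alpha_{p}^{K}=\Big(\prod_{i=1}^{g}e^{f_{i}}\Big)u_{p}^{F-g}=e^{F}\,u_{p}^{F-g}\equiv e\pmod{(\Q^{\times})^{2}}.\]
Thus $\left(\frac{\alpha_{p}^{K}}{p}\right)=\left(\frac{e_{p}^{K}}{p}\right)$, and analogously for $L$. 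Tameness gives $v_{p}(\mathrm{disc}(K))=n-F_{p}^{K}$, so the discriminant equality together with $n_{K}=n_{L}$ forces $F_{p}^{K}=F_{p}^{L}$, hence $e_{p}^{K}=n/F_{p}^{K}=e_{p}^{L}$, and the two Legendre symbols match.

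There remains at most one odd prime $q$, the unique exceptional prime appearing in either field. For $q$ the direct bullet computation is unavailable and I would appeal to global reciprocity. Concretely, using the local description of tame trace forms from \cite{M7}, I expect to express the Hasse--Witt invariant $\epsilon_{p}(\langle O_{K},{\rm t}_{K}\rangle)$ at every odd $p\mid\mathrm{disc}(K)$ as $\left(\frac{\alpha_{p}^{K}}{p}\right)$ modulated by a factor depending only on $n$ and $v_{p}(\mathrm{disc}(K))$. Hilbert reciprocity $\prod_{v}\epsilon_{v}=1$ then forces the product $\prod_{p\mid\mathrm{disc},\,p\ \mathrm{odd}}\left(\frac{\alpha_{p}^{K}}{p}\right)$ to depend only on the archimedean and dyadic Hasse--Witt invariants: the archimedean one is fixed by $s_{K}$, and the dyadic one should be pinned down by the shared $2$-adic discriminant class together with the shared degree in the tame setting. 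Since the product thus agrees for $K$ and $L$ and all non-exceptional factors agree by the previous paragraph, the factor at $q$ must agree too, completing condition (iii) of Proposition 2.9.

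The isometry statement for non-totally-real $K$ follows from Eichler's theorem: the trace form is indefinite of rank $n\ge 3$ over $\R$, and for such lattices each spinor genus consists of a single isometry class, so the spinor genus equality upgrades to isometry. The anticipated hard step is paragraph three. In particular, pinning down the exact identification of $\epsilon_{p}$ with the Legendre symbol of $\alpha_{p}^{K}$ at odd tamely ramified primes, and verifying that the dyadic Hasse--Witt invariants of $\langle O_{K},{\rm t}_{K}\rangle$ and $\langle O_{L},{\rm t}_{L}\rangle$ coincide purely from the shared discriminant, degree and signature, are the two points requiring the most care.
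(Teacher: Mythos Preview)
Your proposal is not a proof of the displayed Proposition---that result is quoted from \cite[Proposition 2.9]{M5} and carries no proof in this paper---but rather a proof of Theorem~\ref{ElPrincipal}. Read as such, your argument is essentially the paper's: your reduction of $\alpha_{p}^{K}$ to $e$ modulo squares at non-exceptional primes is exactly Lemma~\ref{Lemazo}, your global reciprocity argument for the lone exceptional prime is the paper's Hasse--Minkowski step, and your Eichler appeal for the non-totally-real upgrade is the paper's citation of \cite[Theorem 2.13]{M5}. The one packaging difference is that the paper, rather than expressing $\epsilon_{p}$ in terms of $\bigl(\tfrac{\alpha_{p}^{K}}{p}\bigr)$ and running Hilbert reciprocity, first establishes $\Z_{p}$-isometry directly at every place except the exceptional $q$ (using \cite[Proposition 2.7]{M5} for $p=2$ and unimodularity at unramified odd $p$), then invokes Hasse--Minkowski to force $\Q_{q}$-equivalence, and finally reads off the Legendre-symbol equality at $q$ from \cite[Lemma 2.1]{M5} and \cite[Theorem]{M7}; this neatly sidesteps both of the delicate points you flag at the end, since the $2$-adic comparison and the passage from local equivalence back to $\bigl(\tfrac{\alpha_{p}}{p}\bigr)$ are already available as black boxes in the cited references.
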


In $\Gamma$-fields the $\alpha_{p}$ invariant of a non-exceptional ramified prime can be expressed, up to squares, in terms of the degree and the discriminant. More explicitly:

\begin{lemma}\label{Lemazo}

Let $K$ be a degree $n$ $\Gamma$-field of discriminant $d$. Let $p\neq 2$  be a ramified prime in $K$ chosen not to be the possible exceptional prime. Then, \[\alpha_{p}^{K} =\frac{n}{n-v_{p}(d)} \mod (\Z_{p}^{*})^{2},\]

where $v_{p}$ denotes the standard $p$-adic valuation in $\Q$.

\end{lemma}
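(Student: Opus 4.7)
The plan is to simply unpack the three bullet-point conditions that a non-exceptional ramified prime $p$ enjoys, combine them with tameness, and reduce the formula for $\alpha_{p}^{K}$ modulo squares in $\Z_{p}^{*}$. Each of the three bullets has a concrete arithmetic consequence, and together they collapse the product in the definition of $\alpha_{p}^{K}$ to the single factor $e$.

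First I would set up notation. Let $(e_{1},\dots,e_{g})$ and $(f_{1},\dots,f_{g})$ be the ramification and residue degrees of $p$ in $K$. Since $p$ is $\epsilon$-split homogeneous there is a common value $e_{1}=\dots=e_{g}=e$, and from $n = \sum e_{i}f_{i} = e\sum f_{i} = eF$ one reads off $F = n/e$. The third bullet tells us that this $F$ is odd; the second bullet tells us that $g$ is odd; and tameness tells us that $p \nmid e$, so $e \in \Z_{p}^{*}$.

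Next I would compute $v_{p}(d)$. Because $p$ is tame, the standard formula for the local different gives $v_{p}(d) = \sum_{i=1}^{g}(e_{i}-1)f_{i} = (e-1)F = n - F$. Hence $n - v_{p}(d) = F = n/e$, and therefore
\[
\frac{n}{n-v_{p}(d)} \;=\; \frac{n}{F} \;=\; e.
\]
So the lemma reduces to the statement $\alpha_{p}^{K} \equiv e \pmod{(\Z_{p}^{*})^{2}}$.

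Finally, I would simplify $\alpha_{p}^{K}$ itself. The homogeneity of the $e_{i}$ gives
\[
\alpha_{p}^{K} \;=\; \prod_{i=1}^{g} e^{f_{i}} \cdot u_{p}^{F-g} \;=\; e^{F}\, u_{p}^{F-g}.
\]
Since $F$ and $g$ are both odd, $F-g$ is even, so $u_{p}^{F-g} \in (\Z_{p}^{*})^{2}$. Since $F$ is odd and $e \in \Z_{p}^{*}$, we have $e^{F} = e \cdot (e^{(F-1)/2})^{2} \equiv e \pmod{(\Z_{p}^{*})^{2}}$. Combining these two congruences yields $\alpha_{p}^{K} \equiv e \pmod{(\Z_{p}^{*})^{2}}$, as required.

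There is essentially no obstacle: the proof is a bookkeeping exercise, and the only nontrivial input is the tame conductor-discriminant identity $v_{p}(d) = \sum(e_{i}-1)f_{i}$. The real work was done in crafting the definition of $\Gamma$-field so that each of the three bullets kills one potential source of non-squareness in $\alpha_{p}^{K}$ (the $e_{i}^{f_{i}}$ factors collapse to a single $e^{F}$, the parity of $F-g$ makes the $u_{p}$ factor a square, and the parity of $F$ reduces $e^{F}$ to $e$).
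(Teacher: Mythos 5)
Your proof is correct and follows essentially the same route as the paper's: reduce $\alpha_{p}^{K}$ to $e^{F}u_{p}^{F-g}$ via $\epsilon$-split homogeneity, use the parity of $g$ and $F=n/e$ to collapse this to $e$ modulo squares, and invoke the tame formula $v_{p}(d)=(e-1)F=n-F$ to identify $e$ with $n/(n-v_{p}(d))$. Your write-up is in fact slightly more careful than the paper's (e.g., noting explicitly that tameness gives $e\in\Z_{p}^{*}$, which is needed for the congruence modulo $(\Z_{p}^{*})^{2}$ to make sense), but there is no substantive difference.
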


\begin{proof}

Let $p \neq 2$ be a ramified prime in $K$, not exceptional. By definition of $\alpha$ invariants, and since $p$ is $\epsilon$-split homogeneous, $\alpha_{p}^{K}=e^{F}u_{p}^{F-g}$ where $e$ is the ramification invariant of $p$ over $K$. By hypothesis $g$ is odd, and since $n=eF$ we have, by hypothesis as well, that $F$ is odd. Thus, $\alpha_{p}^{K} =e \mod (\Z_{p}^{*})^{2}$. On the other hand, since $p$ is tame, thanks to  \cite[Chapter III, Proposition 13]{Serre}, we have that \[v_{p}(d)=(e-1)F=n-F=n-\frac{n}{e}\] from where the result follows.
\end{proof}

\begin{theorem}\label{ElPrincipal}
 Let $K$and $L$ be two $\Gamma$-number fields. Suppose that the set formed by exceptional primes of $K$ and $L$ contains at most one element. Then $\langle O_{K},  {\rm t}_{K}\rangle$ and $\langle O_{L},  {\rm t}_{L}\rangle$ belong to the same spinor genus if and only if ${\rm disc}(K)={\rm disc}(L)$ and ${\rm Sign}(\langle O_{K},  {\rm t}_{K}\rangle)={\rm Sign}(\langle O_{L},  {\rm t}_{L}\rangle)$. If $K$ is not totally real, then \[\langle O_{K},  {\rm t}_{K}\rangle \cong \langle O_{L},  {\rm t}_{L}\rangle \ \mbox{if and only if} \ {\rm disc}(K)={\rm disc}(L) \ {\rm and} \  {\rm Sign}(\langle O_{K},  {\rm t}_{K}\rangle)={\rm Sign}(\langle O_{L},  {\rm t}_{L}\rangle)\]
\end{theorem}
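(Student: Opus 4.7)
The plan is to reduce the spinor-genus statement to Proposition 2.9 (from \cite{M5}) and to upgrade the conclusion to isometry via Eichler's theorem on indefinite forms. The ``only if'' direction is the standard fact noted in the introduction, so all the work lies in the converse.

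Given the equalities of discriminants and signatures, conditions (i)--(ii) of Proposition 2.9 are immediate, and one also recovers $n := [K:\Q] = r_K + 2s_K = [L:\Q]$. Everything reduces to verifying condition (iii): for each odd prime $p$ dividing the common discriminant $d$, show $\left(\frac{\alpha_p^K}{p}\right) = \left(\frac{\alpha_p^L}{p}\right)$. For every such $p$ that is \emph{not} an exceptional prime of either field, Lemma \ref{Lemazo} applies in both and gives $\alpha_p^K \equiv \alpha_p^L \equiv \tfrac{n}{n-v_p(d)} \pmod{(\Z_p^{*})^2}$, so the two Legendre symbols coincide. By hypothesis the combined set of exceptional primes has at most one element $p_0$, and therefore this already settles every odd ramified prime except possibly $p_0$.

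The main obstacle is this remaining prime $p_0$, where Lemma \ref{Lemazo} may fail for one or both of the fields. The approach I would take is global reciprocity: the values $\left(\frac{\alpha_p^F}{p}\right)$ at odd ramified primes of a tame field $F$ encode (up to universal factors) the local Hasse--Witt invariants of the trace form $\langle O_F, t_F\rangle$ via the $\alpha$-invariant formalism of \cite{M7}, and the Hasse--Minkowski product formula then constrains their product in terms of $n$, $d$, $s_F$, and the behaviour at $p=2$, where the tameness hypothesis (a) in Definition \ref{Gamma} is essential. Since $K$ and $L$ share all of $n$, $d$, $s_F$, and the same tame structure at $2$, the two products coincide. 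Combined with the previous paragraph, the factors at every prime other than $p_0$ already match, forcing agreement at $p_0$ as well. This verifies (iii) and thus the spinor-genus statement. The technical heart of the argument—and the place where one must appeal to the machinery of \cite{M7}—is identifying $\left(\frac{\alpha_p^F}{p}\right)$ with the relevant local Hasse invariant cleanly enough to turn the product formula into the needed cancellation.

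For the isometry refinement, suppose $K$ is not totally real, so $s_K \geq 1$ and $r_K + s_K \geq 1$; the trace form then has mixed signature and is indefinite of rank $n$. Assuming $n \geq 3$ (the cases $n \leq 2$ being trivial under the hypotheses), the classical theorem of Eichler on integral indefinite quadratic forms of rank at least three guarantees that each spinor genus consists of a single isometry class, so the spinor-genus equivalence from the first part upgrades automatically to an isometry, completing the proof.
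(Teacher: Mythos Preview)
Your proposal is correct and follows essentially the same route as the paper: Lemma~\ref{Lemazo} handles all non-exceptional odd ramified primes, a product-formula argument forces agreement at the single possible exceptional prime, and the indefinite case is upgraded to isometry via Eichler's theorem (which is exactly what the paper's citation of \cite[Theorem~2.13]{M5} amounts to). The only cosmetic difference is that the paper phrases the product-formula step as first establishing $\Z_p$-isometry at every place except possibly the exceptional one (using tameness at $2$ and unimodularity at unramified odd primes) and then invoking Hasse--Minkowski over $\Q$, whereas you invoke Hilbert reciprocity on the Hasse--Witt invariants directly; these are equivalent ways of packaging the same local-global cancellation.
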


\begin{proof}
We may assume that $K$ and $L$ have degree at least $3$. We show the non trivial implication. Thanks to Lemma \ref{Lemazo} \[ \left( \frac{\alpha_{p}^{K}}{p} \right)=\left( \frac{\alpha_{p}^{L}}{p} \right) \] for every odd prime $p$ that divides the common discriminant and that is not exceptional. Since $K$ and $L$ have the same signature \[\langle O_{K},  {\rm t}_{K}\rangle \otimes \R \cong \langle O_{L},  {\rm t}_{L}\rangle \otimes \R.\] Since both fields are tame (see \cite[Proposition 2.7]{M5}) \[\langle O_{K},  {\rm t}_{K}\rangle \otimes \Z_{2} = \langle O_{L},  {\rm t}_{L}\rangle \otimes \Z_{2}.\] Moreover, from generalities of $\Z_{p}$-quadratic forms (see \cite[Chp 8, Lemma 3.4]{cassels}) the two forms are isometric over $\Z_{p}$ for every odd not ramified prime $p$. Hence, \[\langle O_{K},  {\rm t}_{K}\rangle \otimes \Z_{p} = \langle O_{L},  {\rm t}_{L}\rangle \otimes \Z_{p}\] with one possible exception; an exceptional prime if it exists. Therefore, by the Hasse-Minkowski principle we have that for every $p$ \[\langle O_{K},  {\rm t}_{K}\rangle \otimes \Q_{p} = \langle O_{L},  {\rm t}_{L}\rangle \otimes \Q_{p}.\] It follows from \cite[Lemma 2.1]{M5} and \cite[Theorem]{M7} that  \[ \left( \frac{\alpha_{p}^{K}}{p} \right)=\left( \frac{\alpha_{p}^{L}}{p} \right) \] for every prime $p$. The result follows from \cite[Proposition 2.9 and Theorem 2.13]{M5}. 

\end{proof}

\noindent
{\footnotesize Guillermo Mantilla-Soler, Department of Mathematics, Universidad Konrad Lorenz,\\
Bogot\'a, Colombia ({\tt gmantelia@gmail.com})}

\end{document}